\numberwithin{equation}{section}
\theoremstyle{plain} 
\newtheorem{thm}{Theorem}[section]
\newtheorem{lm}[thm]{Lemma}
\theoremstyle{definition}
\newtheorem{df}[thm]{Definition}
\newcommand{\wec}[1]{{\mathbf{#1}}}
\newcommand{\wrel}[1]{\;#1\;}  
\newcommand{\m}[1]{{\mathbf{\uppercase{#1}}}}
\newcommand{\Con}{{\mathrm{Con}}}
\newcommand{\Pol}{{\mathrm{Pol}}}
\newcommand{\Min}{{\mathrm{Min}}}
\newcommand{\tw}{{\mathrm{Tw}}}
\newcommand{\snag}[2]{{#1}^{[{#2}]}}
\begin{document}


\title[Supernilpotence]{Is supernilpotence super nilpotence?}

\author[K. A. Kearnes]{Keith A. Kearnes}
\email{kearnes@colorado.edu}
\address{Department of Mathematics\\
University of Colorado\\
Boulder, CO 80309-0395\\
USA}

\author[A. Szendrei]{\'Agnes Szendrei}
\email{szendrei@colorado.edu}
\address{Department of Mathematics\\
University of Colorado\\
Boulder, CO 80309-0395\\
USA}

\thanks{This material is based upon work supported by
  the National Science Foundation grant no.\ DMS 1500254,
  the Hungarian National Foundation for
  Scientific Research (OTKA) grant no.\ K115518, and the
  National Research, Development and Innovation Fund of
  Hungary (NKFI) grant no. K128042.}
\subjclass[2010]{Primary: 03C05; Secondary: 08A05, 08A30}
\keywords{higher commutator, congruence, nilpotent, supernilpotent, tame
congruence theory, twin monoid}

\begin{abstract}
  We show that the answer
  to the question in the title
  is: ``Yes, for finite algebras.''
\end{abstract}

\maketitle


\section{Introduction}\label{intro}
The word ``supernilpotence'', with a specific meaning,
entered the general commutator theory lexicon
just over a decade ago
\cite{aichinger-mudrinski}.
The name suggests that some equation of the form
\begin{equation}\label{super_eq}
\textrm{supernilpotence = nilpotence + $\varepsilon$}
\end{equation}
should be true, but no such equation has ever been shown to hold
except in restricted settings.
Equation~(\ref{super_eq})
is meant to express that
supernilpotence implies nilpotence,
but that nilpotence does not always imply
supernilpotence.

In this paper we establish that Equation~(\ref{super_eq})
holds for finite algebras. The results were obtained in Fall 2017
and announced at the conference {\it Algebra and Lattices in Hawaii}
in Spring 2018.
The question of whether Equation~(\ref{super_eq})
holds for infinite algebras was posed at that conference,
and answered shortly afterwards by two groups of researchers.
The first solution came from
Matthew Moore and Andrew Moorhead,
who constructed in \cite{moore-moorhead}, for any $n>1$,
an algebra $\mathbb A_n$ that is $n$-step supernilpotent,
but not solvable of any degree, hence not nilpotent of any degree.
The second solution came from
Steven Weinell, who determined in \cite{weinell}
all possible higher commutator behaviors of simple algebras.
His work shows that there is
a simple algebra satisfying $[1,1,1]=0$ and $[1,1]=1$.
The first guarantees that the algebra is
$2$-step supernilpotent, while the second guarantees
that the algebra is neutral, which is stronger than
saying it is not solvable of any degree.

Let us describe the context of this research briefly.
The word ``nilpotent'' was introduced into mathematics
in \cite{peirce} to describe
an element $A$ of an associative algebra which satisfies
$\exists n\geq 2(A^n=0)$. The group-theoretic
concept of nilpotence was isolated in the paper
\cite{capelli}, which studied finite groups with
one Sylow $p$-subgroup for each $p$, i.e.
finite groups that factor as a product
of groups of prime power order.
In \cite{bruck}, the concept of
``central nilpotence'' of loops was studied.
This definition of nilpotence for loops agrees
with the commutator-theoretic definition
for groups, but does not agree with the
``prime power factorization into nilpotent factors''
definition from \cite{capelli}.
The difference in the definitions
was made clear in \cite{wright},
where it is shown that a finite loop $L$
has a prime power factorization into nilpotent factors
if and only if $L$ is centrally nilpotent
\emph{and} $L$ has a nilpotent multiplication group.
This result was extended in \cite{freespec}
to any variety of finite signature
which satisfies a congruence identity:
a finite algebra in such a variety
has a prime power factorization into nilpotent factors
if and only if it has a finite bound on the arity
of its nontrivial commutator terms if and only if
it is nilpotent in the sense of
ordinary commutator theory
\emph{and} has a twin monoid that is a nilpotent group.
The middle condition, having a finite bound on the arity
of nontrivial commutator terms, was shown to be
equivalent to supernilpotence for congruence permutable varieties
\cite{aichinger-mudrinski}, and later
this equivalence was extended to congruence modular varieties
in \cite{moorhead}.
Altogether, these results show that, for congruence modular varieties,
a finite algebra of finite signature is supernilpotent if and only if it
has a prime power factorization into nilpotent factors.

It is not difficult to show that these results extend
verbatim from congruence modular varieties
to varieties that omit type {\bf 1}. The reason for this
is that if $\m a$ is a finite supernilpotent algebra 
in a variety that omits type {\bf 1}, then the subvariety
generated by $\m a$ satisfies
$\textrm{typ}\{{\mathcal V}(\m a)\}\subseteq \{{\bf 2}\}$.
(To see why this is so, read the remark after Lemma~\ref{snags}.)
Therefore, if $\m a$ is a finite supernilpotent algebra
in a variety that omits type {\bf 1}, the subvariety
generated by $\m a$ 
will be congruence modular, and one can cite the results
that apply to congruence modular varieties.

But the presence of type ${\bf 1}$ in a variety
makes the problem difficult.
For example, it is easy to see that any finite algebra
that has a finite bound on the essential arities of its term
operations must be supernilpotent of type ${\bf 1}$, but already
in this special case it is not obvious that such algebras
must be nilpotent. (See \cite{strnil} for a proof
of nilpotence in this case.)

The purpose of this paper is to investigate the general case,
when type ${\bf 1}$ is present. We will first show that a
congruence on a finite algebra
that is supernilpotent
has local twin monoids that are nilpotent groups
(cf. \cite{sym, ham, strnil}).
Then we will argue that 
any congruence on a finite
algebra
that has local twin monoids that are nilpotent groups
must be nilpotent. This suffices to prove that
supernilpotence implies nilpotence for finite algebras.

\section{Supernilpotence for Finite Algebras}\label{finite}

Our goal is to prove that, for a congruence
$\beta$ of a finite algebra $\m a$, the
higher commutator condition 
\[
[\beta,\ldots,\beta]=0,\quad\textrm{with $k+1$\; $\beta$'s}
\]
implies the binary commutator conditions
\[
(\beta]^{\ell} = 0 = [\beta)^m
\]
for some $\ell$ and $m$. Here, for an arbitrary
congruence $\theta$, we define
$(\theta]^1=[\theta)^1 = \theta$,
$(\theta]^{\ell+1} = [\theta,(\theta]^{\ell}]$, and
$[\theta)^{m+1} = [[\theta)^{m},\theta]$.
We will connect the higher commutator to the
binary commutator through an intermediate property
involving the action of the 
$\beta$-twin monoids on minimal sets of $\m a$.
We recall the relevant
definitions and notation now.

\bigskip

Given congruences $\alpha, \beta\in\Con(\m a)$, 
the set $M(\alpha,\beta)$ of
{\bf $\alpha,\beta$-matrices}
is
\[
M(\alpha,\beta) =
\left\{\begin{bmatrix}
f(\wec{a},\wec{b})&f(\wec{a},\wec{b}')\\
f(\wec{a}',\wec{b})&f(\wec{a}',\wec{b}')
\end{bmatrix}\Bigg| \;f(\wec{x},\wec{y})\in\Pol(\m a),
\wec{a}\wrel{\alpha} \wec{a}',
\wec{c}\wrel{\beta} \wec{b}'
\right\}.
\]
The relation $C(\alpha,\beta;\delta)$ {\bf holds} if
\begin{equation} \label{implication}
  m_{11}\equiv m_{12}\pmod{\delta}\quad\quad\textrm{implies}\quad\quad
  m_{21}\equiv m_{22}\pmod{\delta}
\end{equation}
whenever
$\begin{bmatrix}
m_{11}&m_{12}\\
m_{21}&m_{22}
\end{bmatrix}\in M(\alpha,\beta)$.
One could indicate the two-dimensional nature
of an $\alpha,\beta$-matrix with
one of the simple diagrams

\begin{picture}(100,70)
\setlength{\unitlength}{.8mm}
\put(15,0){%
\begin{picture}(100,40)
\put(-5,0){\circle*{2}}
\put(15,0){\circle*{2}}
\put(-5,20){\circle*{2}}
\put(15,20){\circle*{2}}
\put(-5,0){\line(1,0){20}}
\put(-5,0){\line(0,1){20}}
\put(15,20){\line(-1,0){20}}
\put(15,20){\line(0,-1){20}}
\put(-21,20){$f(\wec{a},\wec{b})$}
\put(-21,0){$f(\wec{a}',\wec{b})$}
\put(17,20){$f(\wec{a},\wec{b}')$}
\put(17,0){$f(\wec{a}',\wec{b}')$}
\put(-10,9){$\alpha$}
\put(4,22){$\beta$}

\put(32,10){\textrm{or}}

\put(50,0){\circle*{2}}
\put(70,0){\circle*{2}}
\put(50,20){\circle*{2}}
\put(70,20){\circle*{2}}
\put(50,0){\line(1,0){20}}
\put(50,0){\line(0,1){20}}
\put(70,20){\line(-1,0){20}}
\put(70,20){\line(0,-1){20}}
\put(40,20){$m_{11}$}
\put(40,0){$m_{21}$}
\put(72,20){$m_{12}$}
\put(72,0){$m_{22}$}

\put(82,10){\textrm{or just}}

\put(100,0){\circle*{2}}
\put(120,0){\circle*{2}}
\put(100,20){\circle*{2}}
\put(120,20){\circle*{2}}
\put(100,0){\line(1,0){20}}
\put(100,0){\line(0,1){20}}
\put(120,20){\line(-1,0){20}}
\put(120,20){\line(0,-1){20}}
\end{picture}}
\end{picture}

\bigskip

\noindent
Implication (\ref{implication}) could be indicated by

\begin{picture}(100,70)
\setlength{\unitlength}{.8mm}
\put(40,0){%
\begin{picture}(100,40)
\put(-5,0){\circle*{2}}
\put(15,0){\circle*{2}}
\put(-5,20){\circle*{2}}
\put(15,20){\circle*{2}}
\put(-5,0){\line(1,0){20}}
\put(-5,0){\line(0,1){20}}
\put(15,20){\line(-1,0){20}}
\put(15,20){\line(0,-1){20}}
\bezier{200}(-5,20)(5,27)(15,20)
\put(4,25){$\delta$}

\put(30,10){$\Longrightarrow$}

\put(50,0){\circle*{2}}
\put(70,0){\circle*{2}}
\put(50,20){\circle*{2}}
\put(70,20){\circle*{2}}
\put(50,0){\line(1,0){20}}
\put(50,0){\line(0,1){20}}
\put(70,20){\line(-1,0){20}}
\put(70,20){\line(0,-1){20}}
\bezier{200}(50,20)(60,27)(70,20)
\bezier{200}(50,0)(60,7)(70,0)
\put(59,5){$\delta$}
\put(59,25){$\delta$}
\end{picture}}
\end{picture}

\bigskip
\bigskip

\noindent
We define $[\alpha,\beta]$ to be the least $\delta$
such that $C(\alpha,\beta;\delta)$ holds.
We say that $\beta$ is abelian (or 1-step supernilpotent), and
we write $[\beta,\beta]=0$ for this,
exactly when $C(\beta,\beta;0)$ holds.

\bigskip

Now, using less detail than in the binary case,
given three congruences $\alpha, \beta,\gamma\in\Con(\m a)$, 
an {\bf $\alpha,\beta,\gamma$-matrix} is an object
that could be depicted

\begin{picture}(60,120)
\setlength{\unitlength}{1mm}
\put(45,10){%
\begin{picture}(50,40)
\put(-5,0){\circle*{2}}
\put(15,0){\circle*{2}}
\put(-5,20){\circle*{2}}
\put(15,20){\circle*{2}}
\put(-5,0){\line(1,0){20}}
\put(-5,0){\line(0,1){20}}
\put(15,20){\line(-1,0){20}}
\put(15,20){\line(0,-1){20}}

\put(-5,0){\line(1,-1){8}}
\put(15,20){\line(1,-1){8}}
\put(15,0){\line(1,-1){8}}
\put(-5,20){\line(1,-1){8}}

\put(3,-8){\circle*{2}}
\put(23,-8){\circle*{2}}
\put(3,12){\circle*{2}}
\put(23,12){\circle*{2}}
\put(3,-8){\line(1,0){20}}
\put(3,-8){\line(0,1){20}}
\put(23,12){\line(-1,0){20}}
\put(23,12){\line(0,-1){20}}

\put(-22,20){$f(\wec{a},\wec{b},\wec{c})$}
\put(-22,0){$f(\wec{a}',\wec{b},\wec{c})$}
\put(-16,-8){$f(\wec{a}',\wec{b}',\wec{c})$}
\put(17,20){$f(\wec{a},\wec{b},\wec{c}')$}
\put(25,12){$f(\wec{a},\wec{b}',\wec{c}')$}
\put(25,-8){$f(\wec{a}',\wec{b}',\wec{c}')$}

\put(-8,9){$\alpha$}
\put(-2,11){$\beta$}
\put(4,22){$\gamma$}
\end{picture}}
\end{picture}

\bigskip

\noindent
The relation 
$C(\alpha,\beta,\gamma;\delta)$ is defined by an implication
depicted

\begin{picture}(60,120)
\setlength{\unitlength}{1mm}
\put(10,10){%
\begin{picture}(50,40)
\put(-5,0){\circle*{2}}
\put(15,0){\circle*{2}}
\put(-5,20){\circle*{2}}
\put(15,20){\circle*{2}}
\put(-5,0){\line(1,0){20}}
\put(-5,0){\line(0,1){20}}
\put(15,20){\line(-1,0){20}}
\put(15,20){\line(0,-1){20}}

\put(-5,0){\line(1,-1){8}}
\put(15,20){\line(1,-1){8}}
\put(15,0){\line(1,-1){8}}
\put(-5,20){\line(1,-1){8}}

\put(3,-8){\circle*{2}}
\put(23,-8){\circle*{2}}
\put(3,12){\circle*{2}}
\put(23,12){\circle*{2}}
\put(3,-8){\line(1,0){20}}
\put(3,-8){\line(0,1){20}}
\put(23,12){\line(-1,0){20}}
\put(23,12){\line(0,-1){20}}

\bezier{200}(-5,0)(5,7)(15,0)
\put(4,4){$\delta$}
\bezier{200}(-5,20)(5,27)(15,20)
\put(4,24){$\delta$}
\bezier{200}(3,12)(13,19)(23,12)
\put(11,16){$\delta$}

\put(42,7){$\Longrightarrow$}

\put(65,0){\circle*{2}}
\put(85,0){\circle*{2}}
\put(65,20){\circle*{2}}
\put(85,20){\circle*{2}}
\put(65,0){\line(1,0){20}}
\put(65,0){\line(0,1){20}}
\put(85,20){\line(-1,0){20}}
\put(85,20){\line(0,-1){20}}

\put(65,0){\line(1,-1){8}}
\put(85,20){\line(1,-1){8}}
\put(85,0){\line(1,-1){8}}
\put(65,20){\line(1,-1){8}}

\put(73,-8){\circle*{2}}
\put(93,-8){\circle*{2}}
\put(73,12){\circle*{2}}
\put(93,12){\circle*{2}}
\put(73,-8){\line(1,0){20}}
\put(73,-8){\line(0,1){20}}
\put(93,12){\line(-1,0){20}}
\put(93,12){\line(0,-1){20}}

\bezier{200}(65,0)(75,7)(85,0)
\put(74,4){$\delta$}
\bezier{200}(65,20)(75,27)(85,20)
\put(74,24){$\delta$}
\bezier{200}(73,12)(83,19)(93,12)
\put(80,16){$\delta$}
\bezier{200}(73,-8)(83,-1)(93,-8)
\put(80,-4){$\delta$}

\end{picture}}
\end{picture}

\bigskip

(The image is asserting that
for every $\alpha,\beta,\gamma$-matrix, the implication holds.)

\bigskip

\noindent
We define $[\alpha,\beta,\gamma]$ to be the least $\delta$
such that $C(\alpha,\beta,\gamma;\delta)$ holds.
We say that $\beta$ is $2$-step supernilpotent, and write
$[\beta,\beta,\beta]=0$, exactly when $C(\beta,\beta,\beta;0)$ holds.

\bigskip

\noindent
In no detail at all, one can guess how the
set 
of
$\alpha_1,\ldots,\alpha_{k+1}$-matrices is defined,
and how the implication 
$C(\alpha_1,\ldots,\alpha_{k+1};\delta)$ is defined.
(Or, one can refer to
\cite[Definitions 2.1 and 2.8]{moorhead}
to see one way the notational complexities
in high dimensions might be handled. For this paper,
we only need sufficient understanding of the higher
commutator to understand the proof of Lemma~\ref{snags}.)

\bigskip

A congruence $\beta\in\Con(\m a)$
is {\bf $k$-supernilpotent}
or {\bf $k$-step supernilpotent}
if $C(\beta,\ldots,\beta;0)$
holds (with $k+1$ instances of $\beta$).
We also write $[\beta,\ldots,\beta]=0$
(with $k+1$ instances of $\beta$).

\bigskip

  If $\beta\in\Con(\m a)$, then a
{\bf $k$-dimensional $\beta$-snag}, or a
  {\bf $\snag{\beta}{k}$-snag},
  is a pair $(0,1)\in A^2$, $0\neq 1$,
  for which there is a
  $k$-dimensional matrix in $M(\beta,\ldots,\beta)$
  where $2^{k}-1$ entries have value $0$ and
  the remaining entry has value $1$.

  \bigskip

Recall from \cite[Definition~7.1]{hobby-mckenzie}
that, in tame congruence theory, a {\bf 2}-{\bf snag}
is a pair $(0,1)$, $0\neq 1$, for which there is a binary
polynomial $x\wedge y$ whose restriction to $\{0,1\}$
is the meet operation: $0\wedge 0=0\wedge 1=1\wedge 0=0$
and $1\wedge 1=1$. If $(0,1)\in \beta$ is a {\bf 2}-{snag},
then the matrix
\[
\left[\begin{array}{cc}
    0&0\\
    0&1
  \end{array}\right]
=
\left[\begin{array}{cc}
0\wedge 0&0\wedge 1\\
1\wedge 0&1\wedge 1
  \end{array}\right]\;\in\;M(\beta,\beta)
\]
witnesses that $(0,1)$ is a
$\snag{\beta}{2}$-snag.
Moreover, using the polynomial
$x_1\wedge x_2\wedge \cdots\wedge x_k$
(parenthesized in any way) in place of $x\wedge y$, one
can show that if $(0,1)\in \beta$ is a {\bf 2}-{snag},
then it is a
$\snag{\beta}{k}$-snag
for any $k$.

\begin{lm}\label{snags}
If
  \begin{itemize}
\item    
  $\m a$ is a finite algebra,
\item   $\beta\in \Con(\m a)$, and
\item  $\beta$ is $k$-supernilpotent, 
  \end{itemize}
then $\m a$ has no $\snag{\beta}{k+1}$-snags.
\end{lm}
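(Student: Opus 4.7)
My plan is to argue by contradiction, showing that the $(k+1)$-dimensional $\beta$-matrix witnessing a $\snag{\beta}{k+1}$-snag already falsifies the defining implication $C(\beta,\ldots,\beta;0)$ of $k$-supernilpotence. So, suppose $(0,1)$ is such a snag, witnessed by $f(\wec x_1,\ldots,\wec x_{k+1})\in\Pol(\m a)$ and pairs $\wec a_i\wrel{\beta}\wec a_i'$ for $1\le i\le k+1$. I index the $2^{k+1}$ entries by $\epsilon\in\{0,1\}^{k+1}$ via $f_\epsilon:=f(\wec a_1^{\epsilon_1},\ldots,\wec a_{k+1}^{\epsilon_{k+1}})$, with $\wec a_i^0=\wec a_i$ and $\wec a_i^1=\wec a_i'$, and let $\eta$ be the unique index with $f_\eta=1$; all other $2^{k+1}-1$ entries equal $0$.

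The first move is to exploit the matrix's symmetry under the coordinatewise swaps $\wec a_i\leftrightarrow\wec a_i'$: because every direction is labeled with the same congruence $\beta$, any such swap yields another legitimate $(k+1)$-dimensional $\beta$-matrix, simply relabeling the index set $\{0,1\}^{k+1}$. Performing the swap in exactly those coordinates $i$ with $\eta_i=0$ relocates the unique $1$-entry to the fully-primed corner, so without loss of generality $\eta=(1,\ldots,1)$. I then apply $C(\beta,\ldots,\beta;0)$ to this matrix. Reading the implication off the $2$- and $3$-dimensional depictions above (and as formalized in \cite[Definitions 2.1 and 2.8]{moorhead}), its hypothesis requires $f_{\epsilon,0}=f_{\epsilon,1}$ for every $\epsilon\in\{0,1\}^k\setminus\{(1,\ldots,1)\}$, and its conclusion then asserts $f_{(1,\ldots,1),0}=f_{(1,\ldots,1),1}$. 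For each such $\epsilon$ both positions $(\epsilon,0)$ and $(\epsilon,1)$ differ from $(1,\ldots,1)$, so both entries equal $0$ and the hypothesis is satisfied vacuously; the conclusion then forces $0=1$, a contradiction.

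The main obstacle in turning this sketch into a rigorous proof is matching the schematic high-dimensional implication $C(\beta,\ldots,\beta;0)$ to the combinatorial pattern used above, since the paper deliberately leaves the high-dimensional definition in the reader's imagination. One must verify from the formal recursion (see \cite[Definitions 2.1 and 2.8]{moorhead}) that the hypothesis and conclusion take precisely the form claimed, namely that the $2^k$ pairs of entries differing only in the last coordinate constitute the entire arena of the implication, with the pair whose other coordinates are all primed playing the role of the conclusion. Once that identification is in hand the rest is immediate; in particular the finiteness of $\m a$ plays no role in the argument and is presumably present only for consistency with the lemmas that follow.
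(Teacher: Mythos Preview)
Your argument is correct and is exactly the paper's approach: the paper's proof is the single sentence ``Any matrix witnessing that $(0,1)$ is a $\snag{\beta}{k+1}$-snag also witnesses that $C(\beta,\ldots,\beta;0)$ (with $k+1$ $\beta$'s) fails,'' and your write-up simply unpacks this, including the harmless normalization of the distinguished corner via coordinate swaps and the (correct) remark that finiteness is irrelevant here.
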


\begin{proof}
  Any matrix witnessing that $(0,1)$ is a
  $\snag{\beta}{k+1}$-snag also witnesses that
  $C(\beta,\ldots,\beta;0)$ (with $k+1$ $\beta$'s) fails.
\end{proof}

Lemma~\ref{snags} is already strong enough to show
that supernilpotent congruences on finite algebras
are solvable, since (by the lemma
and the remarks preceding it) supernilpotence implies
the absence of {\bf 2}-snags,  
which implies
solvability according to \cite[Theorem~7.2]{hobby-mckenzie}.

A subset $U$ of an algebra $\m a$
is a {\bf neighborhood} if it is the image of some
idempotent unary polynomial.
(That is, $U=e(A)$ for some  $e(x)\in\Pol_1(\m a)$ satisfying $e(e(x))=e(x)$
on $\m a$.) Two unary polynomials $f(x), g(x)\in \Pol_1(\m a)$
are {\bf $\beta$-twins} if there is a polynomial
$h(x,\wec{y}) = h_{\wec{y}}(x)$ and $\beta$-related
parameter sequences $\wec{a}\;\beta\; \wec{b}$
such that $f(x) = h_{\wec{a}}(x)$ and 
$g(x) = h_{\wec{b}}(x)$ on $\m a$.
The {\bf $\beta$-twin monoid on $U$}, $\tw_{\beta}(\m a,U)$,
is the monoid of self-maps $f:U\to U$ 
induced by all polynomials $f$ of $\m a$ satisfying
\begin{enumerate}
\item $f(A)\subseteq U$, and
\item $f$ is a $\beta$-twin of some polynomial
  $g$ whose restriction to $U$ is the identity function on $U$.
\end{enumerate}

It is explained in 
\cite[Lemma~2.2]{freespec} why, when $\m a$ is finite,
there is a single polynomial $s_{\wec{y}}(x)$ and a single tuple $\wec{a}$
such that
$\tw_{\beta}(\m a,U)$ consists entirely of the functions
of the form $s_{\wec{b}}(x)|_U$ where $\wec{b}\;\beta\;\wec{a}$.
We call $s$ a {\bf generic polynomial} for
$\tw_{\beta}(\m a,U)$.

\begin{lm} \label{nilgrp}
If
  \begin{itemize}
\item    
  $\m a$ is a finite algebra,
\item   $\beta\in \Con(\m a)$, and
\item  $\m a$ has no $\snag{\beta}{k+1}$-snags,
\end{itemize}
then for any neighborhood $U$ of $\m a$,
  $\tw_{\beta}(\m a,U)$ acts on $U$ as a
nilpotent group of permutations, with nilpotence class
at most $k$.
\end{lm}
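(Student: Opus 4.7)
The plan is to work with the generic polynomial $s_{\wec{y}}(x)$ and generic tuple $\wec{a}$ described just before the lemma, so that every element of $\tw_\beta(\m a,U)$ has the form $s_{\wec{b}}|_U$ for some $\wec{b}\beta \wec{a}$, with $s_{\wec{a}}|_U = \id_U$. I will proceed in two stages: first show that $\tw_\beta(\m a,U)$ is a group of permutations, then show this group is nilpotent of class at most $k$. In each stage the assumed failure is used to build an explicit $(k+1)$-dimensional $\beta$-matrix whose entries agree at all but one corner, contradicting the absence of $\snag{\beta}{k+1}$-snags.

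For the group stage, suppose some $g = s_{\wec{b}}|_U$ fails to be a permutation of $U$. Since $U$ is finite and the twin monoid is closed under composition, $\langle g\rangle$ contains an idempotent, and that idempotent cannot be $\id_U$ (otherwise $g$ would be a permutation). Write this nonidentity idempotent as $e' = s_{\wec{c}}|_U$ with $\wec{c}\beta \wec{a}$. Pick $u\in U$ with $e'(u)\neq u$ and set $v:=e'(u)$, so that $u\neq v\in U$, $e'(v)=v$, and $u\beta v$ because $e'$ is a $\beta$-twin of $\id_U$. Form
\[
p(z,\wec{y}_1,\ldots,\wec{y}_k) \;=\; s_{\wec{y}_1}\bigl(s_{\wec{y}_2}(\cdots s_{\wec{y}_k}(z)\cdots)\bigr),
\]
and evaluate at $z\in\{u,v\}$ and each $\wec{y}_i\in\{\wec{a},\wec{c}\}$. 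Because $s_{\wec{a}}|_U = \id_U$ while $s_{\wec{c}}|_U = e'$ sends both $u$ and $v$ to $v$, the only corner where $p$ is not $v$ is $(u,\wec{a},\ldots,\wec{a})$, where $p = u$. The resulting $(k+1)$-dimensional $\beta$-matrix is therefore a $\snag{\beta}{k+1}$-snag, a contradiction. Hence $G := \tw_\beta(\m a,U)$ is a group.

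For the nilpotence stage, let $n$ be the exponent of the finite group $G$, so that $s_{\wec{y}}^{n-1}(x)$ restricts on $U$ to the inverse of $s_{\wec{y}}|_U$ uniformly in $\wec{y}\beta \wec{a}$. Given $g_i = s_{\wec{b}_i}|_U\in G$ for $i=0,\ldots,k$, write the right-normed iterated commutator $W(z_0,\ldots,z_k) = [z_0,[z_1,[\cdots,[z_{k-1},z_k]\cdots]]]$ as a word in the formal letters $z_i^{\pm 1}$ and substitute $s_{\wec{y}_i}$ for $z_i$ and $s_{\wec{y}_i}^{n-1}$ for $z_i^{-1}$ to obtain a polynomial $q(x,\wec{y}_0,\ldots,\wec{y}_k)$. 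Fix $x = u\in U$ and vary $\wec{y}_i\in\{\wec{a},\wec{b}_i\}$. Substituting $\wec{y}_j = \wec{a}$ replaces both $z_j$ and $z_j^{-1}$ by $\id$ in the word, and any iterated commutator in a group collapses to the identity as soon as one of its arguments equals the identity. Hence on this $(k+1)$-dimensional $\beta$-matrix the value is $u$ at every corner except possibly $(\wec{b}_0,\ldots,\wec{b}_k)$, where it equals $[g_0,g_1,\ldots,g_k](u)$. If this iterated commutator were a nonidentity element of $G$, choosing $u$ not fixed by it would yield a $\snag{\beta}{k+1}$-snag, contradicting the hypothesis; so every such iterated commutator is the identity, and $G$ has nilpotence class at most $k$.

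The main technical obstacle is the group stage: non-permutation twins must be excluded using only a $(k+1)$-dimensional snag hypothesis rather than the more natural $2$-dimensional one. The device is to iterate the generic polynomial $k$ extra times in the identity-block parameters $\wec{y}_1,\ldots,\wec{y}_k$, which amplifies the essentially binary obstruction "$e'$ collapses $u$ to $v$" into a genuinely $(k+1)$-dimensional snag. Once the group structure is in hand, the nilpotence step is structurally cleaner, because a $(k+1)$-fold iterated commutator word already involves $k+1$ formal arguments and thus naturally produces a $(k+1)$-dimensional matrix of the required shape.
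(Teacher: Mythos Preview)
Your proof is correct and follows essentially the same two-stage strategy as the paper: first rule out nonpermutations by composing the generic polynomial with itself $k$ times to inflate a single idempotent collapse into a $(k{+}1)$-dimensional snag, then rule out non-$k$-nilpotence by building the $(k{+}1)$-fold group commutator word out of specializations of the generic polynomial and fixing $x$. The only cosmetic differences are that the paper first iterates $s_{\wec{y}}(x)$ in $x$ to an idempotent polynomial $t_{\wec{y}}(x)$ before composing (whereas you pick an idempotent element of the monoid directly), and in the nilpotence step the paper simply writes $[s_{\wec{y}_1},\ldots,s_{\wec{y}_{k+1}}](x)$ without explaining how inverses are realized, whereas you spell this out via $(n{-}1)$th powers using the exponent $n$ of $G$.
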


\begin{proof}
  The finite monoid $\tw_{\beta}(\m a,U)$ acts on $U$.
  If it contains a nonpermutation, then it
  contains an idempotent nonpermutation.
  If $\tw_{\beta}(\m a,U)$ contains
  an idempotent nonpermutation, then
  by iterating the generic polynomial
  $s_{\wec{y}}(x)$ as a function of $x$
  until it is idempotent in $x$ we obtain
  a polynomial
  $t_{\wec{y}}(x)$ for which there are
  $\beta$-related parameter sequences
  $\wec{b}, \wec{c}$ such that
  \begin{enumerate}
  \item[(i)]
    $t_{\wec{a}}(t_{\wec{a}}(x))=t_{\wec{a}}(x)$ and
    $t_{\wec{a}}(A)\subseteq U$ for all parameter sequences $\wec{a}$
  from $A$,
\item[(ii)] $t_{\wec{b}}(x)=x$ for $x\in U$, and
\item[(iii)]
  $t_{\wec{c}}(x)$
  is an idempotent nonpermutation of $U$.
  In particular, there are $u, v\in U$ such that
  $u\neq t_{\wec{c}}(u)=v=t_{\wec{c}}(v)$.
  \end{enumerate}
  Notice that $(u,v)=(t_{\wec{b}}(u),t_{\wec{c}}(u))\in\beta$,
  since $\wec{b}\wrel{\beta}\wec{c}$.

  Let
  $p(x,\wec{y}_1,\ldots,\wec{y}_k)=(t_{\wec{y_k}}\circ\cdots\circ t_{\wec{y_1}})(x)$.
  It is not hard to see that the $(k+1)$-dimensional
  matrix obtained from $p$ by making the $\beta$-related choices
  $x\in \{u,v\}$ and $\wec{y}_i\in\{\wec{b},\wec{c}\}$ is a 
  $\snag{\beta}{k+1}$-snag, since the value is $u$ if
  $x=u$ and $\wec{y}_i=\wec{b}$ for all $i$ and the value is $v$ otherwise.
  This shows that  if $\m a$ has no $\snag{\beta}{k+1}$-snags, then
$\tw_{\beta}(\m a,U)$ acts on $U$ as a group of permutations.

  Now suppose that
  $\tw_{\beta}(\m a,U)$
  acts on $U$ as a group of permutations,
  but not as a group of permutations of nilpotence
  class at most $k$.
  There must exist permutations in $\tw_{\beta}(\m a,U)$
  which fail to satisfy the group-theoretic
  identity
  \[
    [x_1,\ldots,x_{k+1}]=1,
    \]
  which asserts $k$-step nilpotence. More fully, this means that
  there exist permutations 
  $\gamma_1,\ldots, \gamma_{k+1}\in \tw_{\beta}(\m a,U)$
  such that the permutation of $U$ represented by the group commutator
  $[\gamma_1,\ldots, \gamma_{k+1}]$ is not the identity
  function on $U$. To unravel this statement even further,
  there exist $u\neq v$ in $U$ such that
  $[\gamma_1,\ldots, \gamma_{k+1}](v)=u$.
Since each $\gamma_i$ is a $\beta$-twin of the identity function on $U$,
  there must exist
  parameter sequences $\wec{b}, \wec{c}_i$, with $\wec{b}\;\beta\;\wec{c}_i$,
  such that
  for the generic polynomial $s_{\wec{y}}(x)$ we have 
  \begin{enumerate}
  \item[(i)]
$s_{\wec{b}}(x) = x$ on $U$
  \item[(ii)] $s_{\wec{c}_i}(x)=\gamma_i(x)$ on $U$.
  \end{enumerate}

$\m a$ has a polynomial $q(x,\wec{y}_1,\ldots,\wec{y}_{k+1})$
equal to 
$[s_{\wec{y}_1},\ldots, s_{\wec{y}_{k+1}}](x)$.
For this polynomial we have 
$q(x,\wec{c}_1,...,\wec{c}_{k+1})=[\gamma_1,...,\gamma_{k+1}](x)$,
which is a permutation of $U$ that maps $v$ to $u$.
But any other specialization of
$q(x,\wec{y}_1,...,\wec{y}_{k+1})$
with $\wec{y}_i\in\{\wec{b},\wec{c}_i\}$ results in a polynomial
which, as a function of $x$, is the identity permutation of $U$.
Consider the $(k+1)$-dimensional
matrix obtained from $q$ by fixing $x=v$ 
and making the $\beta$-related choices
$\wec{y}_i\in\{\wec{b},\wec{c}_i\}$.
This matrix will be a 
  $\snag{\beta}{k+1}$-snag, since the value is $u$ if
  each $\wec{y}_i$ is chosen to be $\wec{c}_i$, while
  the value is $v$ if any $\wec{y}_i$
  is chosen to be $\wec{b}$.
  In the contrapositive form,
  if $\m a$ has no   $\snag{\beta}{k+1}$-snags,
  then $\tw_{\beta}(\m a,U)$ acts on $U$ as a group of permutations
  of nilpotence class at most $k$.
\end{proof}

\begin{lm} \label{centrality}
If
  \begin{itemize}
\item    
  $\m a$ is a finite algebra,
\item   $\beta\in \Con(\m a)$,
\item $\delta\prec \theta$ in $\Con(\m a)$,
\item $U\in\Min_{\m a}(\delta,\theta)$, 
\item $N$ is a $\langle \delta,\theta\rangle$-trace of $U$, and
\item $\tw_{\beta}(\m a,U)$ acts as a group of permutations on $U$,
\end{itemize}
then $C(\beta,N^2;\delta)$ holds.
\end{lm}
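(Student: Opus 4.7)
The plan is to argue by contradiction. Suppose $C(\beta,N^2;\delta)$ fails: then there exist a polynomial $f$ of $\m a$, parameter tuples $\wec a\wrel\beta\wec a'$, and tuples $\wec b_0,\wec b_1$ with entries in $N$ such that $f(\wec a,\wec b_0)\equiv f(\wec a,\wec b_1)\pmod\delta$ but $f(\wec a',\wec b_0)\not\equiv f(\wec a',\wec b_1)\pmod\delta$. My goal is to manufacture a polynomial in $\tw_\beta(\m a,U)$ that does not act as a permutation of $U$, contradicting the hypothesis.

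The first step is a tame-congruence-theoretic reduction. Let $e$ be an idempotent polynomial with image $U$. Since $f(\wec a',\wec b_0)$ and $f(\wec a',\wec b_1)$ are $\theta$-related but not $\delta$-related, they lie in a trace of some $(\delta,\theta)$-minimal set polynomially isomorphic to $U$; composing $f$ with a suitable polynomial isomorphism and then with $e$ replaces $f$ by a polynomial whose image lies in $U$, while preserving the $\delta$-collapse of the first pair and the $\delta$-separation of the second. I then use polynomials of the minimal algebra on $N/\delta$ to interpolate each coordinate of $\wec b_0,\wec b_1$ from a single pair $y_0,y_1\in N$ with $y_0\not\equiv y_1\pmod\delta$, producing a \emph{unary} polynomial $h(x,\wec z)$ whose image lies in $U$. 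Setting $P(x):=h(x,\wec a)$ and $Q(x):=h(x,\wec a')$, the polynomials $P,Q\colon A\to U$ are $\beta$-twins, $P(y_0)\equiv P(y_1)\pmod\delta$, and $Q(y_0)\not\equiv Q(y_1)\pmod\delta$.

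Next I would apply the key dichotomy. First, $P|_U$ cannot be a permutation of $U$: any polynomial that is a bijection of $U$ must permute the $\delta$-classes of $U$ (polynomials respect $\delta$), which is impossible given $P(y_0)\equiv P(y_1)\pmod\delta$ together with $y_0\not\equiv y_1\pmod\delta$. Second, by the standard tame-congruence-theoretic dichotomy for polynomials on a $(\delta,\theta)$-minimal set---either the body of $U$ is collapsed to a single $\delta$-class, or the polynomial acts as a permutation of $U$---the restriction $Q|_U$ \emph{is} a permutation of $U$, since $Q$ does not collapse the trace $N\subseteq U$ modulo $\delta$.

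To finish, let $m$ be the order of $Q|_U$ in $\mathrm{Sym}(U)$, so that $Q^m|_U=\mathit{id}_U$. The iterated polynomial $h^m(x,\wec z_1,\ldots,\wec z_m)$, obtained by $m$-fold composition of $h$ with independent parameter tuples, exhibits $P^m$ and $Q^m$ as $\beta$-twins via the choices $(\wec a,\ldots,\wec a)$ and $(\wec a',\ldots,\wec a')$. Since $Q^m|_U=\mathit{id}_U$ and $P^m(A)\subseteq U$, we conclude $P^m\in\tw_\beta(\m a,U)$. But $P|_U$ is not a permutation of the finite set $U$, so $P^m|_U$ is not a permutation either, contradicting the hypothesis. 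The main obstacle is the tame-congruence-theoretic groundwork---the reduction to a unary polynomial landing in $U$ and the invocation of the dichotomy that forces $Q|_U$ to permute $U$; once these are in place, the iteration step above produces the contradiction essentially automatically.
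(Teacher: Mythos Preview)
Your contrapositive strategy---assume $C(\beta,N^2;\delta)$ fails and manufacture a nonpermutation in $\tw_\beta(\m a,U)$---matches exactly what the paper says the argument in \cite[Lemma~3.4]{ham} establishes (the paper gives no self-contained proof here, only this citation together with the remark that the $\beta=1$ argument extends verbatim). Your use of the minimal-set dichotomy to force $Q|_U$ to be a permutation, your observation that a polynomial permutation of $U$ must permute the $\delta|_U$-classes (so $P|_U$ cannot be one), and the final iteration step that places $P^m$ in $\tw_\beta(\m a,U)$ are all correct and are the heart of the matter.

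The genuine gap is the interpolation step. You assert that ``polynomials of the minimal algebra on $N/\delta$'' let you replace the tuples $\wec b_0,\wec b_1\in N^n$ by a single pair $(y_0,y_1)$; concretely, that for each coordinate $i$ there is a polynomial $p_i$ of $\m a|_N/(\delta|_N)$ with $p_i(y_0)\equiv(\wec b_0)_i$ and $p_i(y_1)\equiv(\wec b_1)_i$ modulo~$\delta$. For type~$\mathbf 2$ the trace modulo $\delta$ is a one-dimensional vector space and affine maps interpolate any two points; for types $\mathbf 3,\mathbf 4,\mathbf 5$ the trace has exactly two $\delta$-classes and interpolation is trivial. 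But for type~$\mathbf 1$---precisely the case this paper is written to handle---the induced algebra on $N/\delta$ is polynomially equivalent to a simple $G$-set, and its unary polynomials are exactly the constants together with the $G$-translations $x\mapsto g\cdot x$. Such a map carries $(y_0,y_1)$ to $((\wec b_0)_i,(\wec b_1)_i)$ only when these two pairs lie in the same $G$-orbit on $(N/\delta)^2$. If $G$ is transitive but not $2$-transitive (for instance $\mathbb Z/5$ acting regularly on itself, which is a simple $G$-set with four distinct off-diagonal orbits) the required $p_i$ need not exist, and your reduction from an $n$-variable failure of $C(\beta,N^2;\delta)$ to a one-variable failure breaks down. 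Since the lemma is invoked later in the paper specifically at type-$\mathbf 1$ covers, this is not a harmless edge case. The passage from many $N$-variables to one is genuine work here; the argument in \cite{ham} does not proceed via the two-point interpolation you describe, and you will need a different device for that reduction.
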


\begin{proof}
  This is proved in 
  \cite[Lemma~3.4]{ham}
  in the case $\beta=1$, but the same argument
  holds for general $\beta$. Namely, the argument shows that
  if $C(\beta,N^2;\delta)$ fails, then
  $\tw_{\beta}(\m a,U)$ contains an idempotent
  nonpermutation.
\end{proof}

Now we recall from
\cite[Definition 4.12]{sym} the definition of a
$\beta$-regular quotient $\langle \delta, \theta\rangle$
of type~${\bf 1}$.

\begin{df}
  Let $\m a$ be a finite algebra with a tame quotient
$\langle \delta, \theta\rangle$  
  of type~${\bf 1}$, and let $\beta$ be a congruence of $\m a$.
  $\langle \delta, \theta\rangle$  is {\bf $\beta$-regular}
  if whenever
\begin{itemize}
\item
  $U\in\Min_{\m a}(\delta, \theta)$,
\item
  $N$ is a trace of $U$,
\item $H_{N,\beta}$
  is the subgroup of $\tw_{\beta}(\m a, U)$
  consisting of polynomials
  that map $N$ into itself,
\item   
  $p(x) \in H_{N,\beta}$ has a fixed point modulo $\delta$ on $N$
  (i.e., $p(u) \;\delta\; u$ for some $u\in N$),
\end{itemize}
then $p(x)$ is the identity modulo $\delta$ on $N$
(i.e., $p(x)\;\delta \;x$ for all $x\in N$).
\end{df}

The preceding definition hints that there
will be some group theory component to the
next part of the proof. We isolate
the fact from group theory that will be needed in our proof.

\begin{lm}\label{group}
If $G$ is finite group, $K$ is a
  core-free maximal subgroup of $G$, and
  $H\lhd G$ is a nilpotent normal subgroup,
  then $H$ is abelian and $H\cap K = \{1\}$.
\end{lm}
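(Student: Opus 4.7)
The plan is to pivot on $Z:=Z(H)$, the center of $H$. Since $H$ is nilpotent, we may assume $H\neq 1$, in which case $Z\neq 1$. Because $Z$ is characteristic in $H$ and $H\lhd G$, we have $Z\lhd G$. Core-freeness of $K$ forbids $Z\subseteq K$ (otherwise $Z$ would lie in $\bigcap_{g\in G}gKg^{-1}=\{1\}$), so by maximality of $K$,
\[
G \;=\; \langle K,Z\rangle \;=\; KZ.
\]

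From this decomposition I would next derive $Z\cap K=\{1\}$ and $H\cap K=\{1\}$. The subgroup $Z\cap K$ is normalized by $K$ (which normalizes both $K$ and $Z$) and by the abelian group $Z$; hence it is normalized by $G=KZ$, is contained in $K$, and is therefore trivial by core-freeness. The same trick, but using centralization in place of normalization inside $H$, handles $H\cap K$: it is normalized by $K$, and is centralized by $Z\subseteq Z(H)$ because $Z$ centralizes all of $H$. Thus $H\cap K$ is normalized by $G=KZ$, contained in $K$, and trivial for the same reason.

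Finally, to conclude that $H$ is abelian, I would apply the factorization $G=KZ$ pointwise inside $H$. For any $h\in H$, write $h=kz$ with $k\in K$ and $z\in Z\subseteq H$; then $k=hz^{-1}\in H$, so $k\in H\cap K=\{1\}$, forcing $h=z\in Z$. Hence $H\subseteq Z=Z(H)$, so $H$ is abelian. The conceptual crux, and the only nonobvious step, is the choice of $Z(H)$ as the intermediate subgroup: it is simultaneously large enough to exhaust $G$ together with $K$ (by maximality) and central enough in $H$ to let $H\cap K$ be treated just like $Z\cap K$; the hypothesis that $H$ is nilpotent enters only to guarantee $Z(H)\neq 1$.
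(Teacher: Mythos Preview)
Your proof is correct and shares the same opening moves as the paper's: reduce to $H\neq 1$, take $Z=Z(H)$, note $Z\lhd G$ and $Z\not\subseteq K$, deduce $G=ZK$, and show $Z\cap K=\{1\}$ by observing it is normal in $G$ and contained in $K$. The divergence comes in how $H\cap K=\{1\}$ and $H=Z(H)$ are obtained. The paper uses the complementary factorization $G=Z(H)\cdot K$ to build a retraction $\varphi\colon H\to Z(H)$, verifies $\varphi$ is a homomorphism, reads off $\ker\varphi=H\cap K$, gets $H\cong Z(H)\times H/Z(H)$, and then appeals to a comparison of nilpotence classes to force $H=Z(H)$ and $H\cap K=\{1\}$. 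Your route is shorter: you observe directly that $H\cap K$ is normalized by $K$ (since $H\lhd G$) and centralized by $Z=Z(H)$, hence normal in $G=KZ$, hence trivial by core-freeness; abelianness of $H$ then drops out of the factorization $H\subseteq KZ$ together with $H\cap K=\{1\}$. Your argument avoids the homomorphism check and the nilpotence-class step entirely; the paper's argument, on the other hand, yields the structural byproduct $H\cong Z(H)\times(H\cap K)$ along the way, though that is not used elsewhere.
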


Recall that the {\bf core} of a subgroup $K$ of $G$ is 
the intersection of the $G$-conjugates of $K$. $K$
is {\bf core-free} in $G$ if its core is trivial,
equivalently $K$ is core-free if $\{1\}$ is the only subgroup
of $K$ that is normal in $G$.

\begin{proof}
  If $H\subseteq K$, then since $H\lhd G$ and $K$
  is core-free we get $H=\{1\}$, so $H$ is abelian and
  $H\cap K = \{1\}$.

  Assume $H\not\subseteq K$. Since $K\prec G$ and $H\lhd G$,
  we derive that   $HK=G$.
  Since the center of $H$ is characteristic
  in $H$, $Z(H)$ is normal in $G$.
  $Z(H)$ is nontrivial, since $H$ is nilpotent.
  $Z(H)\not\leq K$, since $K$ is core-free, so $G = Z(H)K$.
  Note that $Z(H)\cap K$ is normal in both $Z(H)$
  (since $Z(H)$ is abelian) and $K$ (since $Z(H)\lhd G$),
  hence $Z(H)\cap K\lhd Z(H)K=G$,
  hence $Z(H)\cap K$ is contained in the core of $K$.
  This shows that $Z(H)\cap K=\{1\}$.

  So far we have established that $Z(H)$ is a normal
  complement to $K$. This shows that any $g\in G$
  is uniquely representable as $g = z_gk_g$ where $z_g\in Z(H)$
  and $k_g\in K$. Moreover, if $h\in H$, then
  in the representation $h=z_hk_h$ we have $k_h = z_h^{-1}h\in H$,
  so in fact $k_h\in H\cap K$. This is enough to establish that  
  the function $\varphi:H\to Z(H): h\mapsto z_h$ is a
  group homomorphism. To see this, assume that
  $h=z_hk_h$ and $h'=z_{h'}k_{h'}$.
  Then 
  \[
hh'=z_hk_hz_{h'}k_{h'}=z_h(k_hz_{h'})k_{h'}
  \stackrel{?}{=}z_h(z_{h'}k_h)k_{h'}=(z_hz_{h'})(k_hk_{h'}).
  \]
  Here the equality $\stackrel{?}{=}$ is justified by the facts that
  $k_h\in H\cap K\subseteq H$ and $z_{h'}\in Z(H)$. Now the unique
  $Z(H)K$-representation of $hh'$ is both
  $z_{hh'}k_{hh'}$ and $(z_hz_{h'})(k_hk_{h'})$,
  so $z_{hh'}=z_hz_{h'}$, which is what it means for
  $\varphi$ to be a homomorphism.

  By examination one sees that $\varphi$ is the identity on
  its image $Z(H)$, and that the kernel of $\varphi$ is $H\cap K$.
  Therefore $H\cap K$ is a normal complement to $Z(H)$ in $H$.
  Thus forces $H\cong Z(H)\times (H\cap K)\cong Z(H)\times H/Z(H)$.
  The rightmost side
  has smaller nilpotence degree than the leftmost unless $H=Z(H)$
  and $H\cap K=\{1\}$, so we conclude that
  $H$ is abelian and $H\cap K=\{1\}$,
  as desired.
\end{proof}

\begin{lm} \label{regular}
If
  \begin{itemize}
\item    
  $\m a$ is a finite algebra,
\item   $\beta\in \Con(\m a)$,
\item $\delta\prec \theta$ is a type-{\bf 1} covering in $\Con(\m a)$,
\item $U\in\Min_{\m a}(\delta,\theta)$, 
\item $N$ is a $\langle \delta,\theta\rangle$-trace of $U$, and
\item $\tw_{\beta}(\m a,U)$ acts as a nilpotent
  group of permutations on $U$,     
\end{itemize}
then   $\langle \delta, \theta\rangle$  is $\beta$-regular.
\end{lm}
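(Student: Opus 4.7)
The plan is to recast the $\beta$-regularity of $\langle\delta,\theta\rangle$ as the consequence $H\cap K=\{1\}$ of Lemma~\ref{group}, applied to a suitable trio $(G,H,K)$ of permutation groups on $N/\delta$. To begin, I would invoke Lemma~\ref{centrality} to obtain $C(\beta,N^2;\delta)$. Choose a generic polynomial $s(x,\wec{y})$ for $\tw_\beta(\m a,U)$ together with a reference tuple $\wec{a}$ such that $s(x,\wec{a})|_U=\mathrm{id}_U$. For $p\in H_{N,\beta}$, write $p=s(\cdot,\wec{c})|_U$ with $\wec{c}\;\beta\;\wec{a}$. Applying $C(\beta,N^2;\delta)$ to the matrix
\[
\begin{bmatrix} s(x,\wec{a}) & s(x',\wec{a}) \\ s(x,\wec{c}) & s(x',\wec{c}) \end{bmatrix} = \begin{bmatrix} x & x' \\ p(x) & p(x') \end{bmatrix}\qquad (x,x'\in N)
\]
shows that $x\;\delta\;x'$ forces $p(x)\;\delta\;p(x')$, so $p$ descends to a well-defined permutation $\bar p$ of $N/\delta$. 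Let $\bar H\subseteq\mathrm{Sym}(N/\delta)$ denote the image of $H_{N,\beta}$; as a homomorphic image of a subgroup of the nilpotent group $\tw_\beta(\m a,U)$, it is nilpotent.

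Next, since $\langle\delta,\theta\rangle$ has type~${\bf 1}$, tame congruence theory identifies $\m a|_N/\delta|_N$ (up to polynomial equivalence) with a $G$-set on $N/\delta$. Let $\bar P$ be the full group of polynomial permutations of $N/\delta$; it acts faithfully and transitively on $N/\delta$, and primitively, since the $(0,1)$-minimal quotient admits no nontrivial proper congruences. Fixing $\bar u\in N/\delta$, the stabilizer $K:=\bar P_{\bar u}$ is thus a core-free maximal subgroup of $\bar P$. The key technical step is the normality $\bar H\lhd\bar P$. Given $\bar f\in\bar P$, I would lift $\bar f$ and $\bar f^{-1}$ to polynomials $f,f'\in\Pol(\m a)$ with $f(A),f'(A)\subseteq U$ and $(f\circ f')|_U=\mathrm{id}_U$, using that the polynomial permutations of the type-${\bf 1}$ minimal set $U$ form a group. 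For $\bar h=\overline{s(\cdot,\wec{c})}\in\bar H$, the parameterized polynomial $R(x,\wec{y}):=f(s(f'(x),\wec{y}))$ then satisfies $R(\cdot,\wec{a})|_U=\mathrm{id}_U$ while $R(\cdot,\wec{c})$ induces $\bar f\bar h\bar f^{-1}$ on $N/\delta$, with $R(A,\wec{c})\subseteq U$. Since $\wec{a}\;\beta\;\wec{c}$, this places $R(\cdot,\wec{c})$ in $H_{N,\beta}$ and realizes $\bar f\bar h\bar f^{-1}$ inside $\bar H$.

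Finally, applying Lemma~\ref{group} with $G=\bar P$, $K=\bar P_{\bar u}$, and $H=\bar H$ yields $\bar H\cap K=\{e\}$: any $p\in H_{N,\beta}$ with $p(u)\;\delta\;u$ projects to $\bar p\in\bar H\cap K$, hence equals the identity of $\bar P$, giving $p(x)\;\delta\;x$ for all $x\in N$. This is exactly $\beta$-regularity. The principal obstacle is the normality step: one must verify that the lift $f'$ of $\bar f^{-1}$ can be chosen so that $f\circ f'$ is the identity on all of $U$ (not merely modulo $\delta$ on $N$), which is what makes $R(\cdot,\wec{a})$ exactly the identity on $U$ and places $R(\cdot,\wec{c})$ genuinely in the twin monoid.
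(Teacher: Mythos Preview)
Your approach is essentially the paper's: set $G$ to be the group of polynomial permutations of $\m m=\m a|_N/\delta|_N$, let $H$ be the image of $H_{N,\beta}$, let $K$ be a point stabilizer, and invoke Lemma~\ref{group}. Two points deserve correction, however.

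First, the appeal to Lemma~\ref{centrality} is unnecessary. Every $p\in H_{N,\beta}$ is a polynomial of $\m a$, so it already preserves $\delta$; the implication $x\wrel{\delta}x'\Rightarrow p(x)\wrel{\delta}p(x')$ is automatic and does not require $C(\beta,N^2;\delta)$. Second, your assertion that $\bar P$ acts transitively and primitively on $N/\delta$ is false in the discrete case. Since $\m m$ is a simple $G$-set, either the action is transitive (hence primitive, and your argument proceeds) or $|N/\delta|=2$ with $\bar P=\{1\}$; in the latter case there is no maximal subgroup to speak of, but $\bar H=\{1\}$ as well and $\beta$-regularity is vacuous. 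The paper handles this with an explicit case split; you should too. Your normality argument is correct but more laborious than needed: once you observe that a lift $f$ of $\bar f$ can be taken to be a permutation of $U$ (since it does not collapse $\theta|_U$ into $\delta|_U$), the paper's one-line reason applies---conjugating a $\beta$-twin of the identity by $f$ yields a $\beta$-twin of $f\circ\id\circ f^{-1}=\id$.
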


\begin{proof}
  Let $\m m = \m A|_N/\delta|_N$.
  By 
  \cite[Corollary~5.2~(1)]{hobby-mckenzie}, $\m m$ is a minimal
  algebra of type ${\bf 1}$.   By the definition of type ${\bf 1}$
  (\cite[Definition~4.10]{hobby-mckenzie}), $\m m$ is polynomially
  equivalent to a $G$-set.
By  \cite[Lemma 2.4]{hobby-mckenzie} and the fact that
  $\delta\prec\theta$, $\m m$ is simple.

  Let $G$ be the group of polynomial permutations of $\m m$.
  Let $H = H_{N,\beta}/\delta$ be the subgroup of $G$
  represented by $\beta$-twins of the identity.
  Note that $H\lhd G$, since any conjugate of a twin of the identity
  is a twin of a conjugate of the identity, hence
  is a twin of the identity. Also note that $H$ is nilpotent,
  since it is a quotient of $H_{N,\beta}$, which is a subgroup
  of the nilpotent group $\tw_{\beta}(\m a,U)$.
  Finally let $K=K_{u/\delta}$
  be the stabilizer of some point $u/\delta\in N/\delta=M$.
  To prove that
  $\langle \delta, \theta\rangle$  is $\beta$-regular we must
  show that any $\beta$-twin of the identity in $G$
  (i.e., an element
  representing an element of $H$) which has some fixed point
  $u/\delta$ on $N/\delta=M$
  (i.e., which lies in some $K=K_{u/\delta}$)
  must be the identity modulo $\delta$
  (i.e., represents the identity element of $G$).
  In short, we want to prove that $H\cap K=\{1\}$.

  As a first case, assume that $\m m$ is a 
  discrete
  $G$-set (so $G$, $H$, $K$ are all trivial). Clearly
  $H\cap K=\{1\}$.

  The remaining case is the one where $\m m$ is not
  discrete. Since $\m m$ is simple, $G$ acts primitively
  on $M$, and
  any 1-point stabilizer $K=K_{u/\delta}$ is a maximal
  subgroup of $G$.
  We are now in the situation of Lemma~\ref{group}, so
  $H\cap K = \{1\}$, as desired.
\end{proof}

\begin{thm}
  Suppose that $\m a$ is a finite algebra,
  and $\beta\in \Con(\m a)$.
 The following implications
  hold among the listed properties:
$(1)\Rightarrow(2)\Rightarrow(3)\Rightarrow(4)\Rightarrow(5)$.
  \begin{enumerate}
  \item[(1)]
    $\beta$ is $k$-supernilpotent.
  \item[(2)]
$\m a$
  has no $\snag{\beta}{k+1}$-snags.
  \item[(3)]
$\tw_{\beta}(\m a,U)$ acts as a nilpotent
    group of permutations,  with nilpotence class at most $k$,
    on any minimal set $U$ of $\m a$.
\item[(4)]
  $C(\beta,\theta;\delta)$ and 
  $C(\theta,\beta;\delta)$ hold whenever $\delta\prec\theta$
  in $\Con(\m a)$.
\item[(5)]
  $\beta$ is (both left and right) nilpotent.
    \end{enumerate}
\end{thm}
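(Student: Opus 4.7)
The plan is to prove the four implications in sequence, cashing in on the preceding lemmas for the first three steps and using a finite-lattice descent for the last.

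The first two implications are essentially restatements of lemmas already in hand. For $(1)\Rightarrow(2)$ I would simply invoke Lemma~\ref{snags}. For $(2)\Rightarrow(3)$ I would observe that every minimal set of $\m a$ is, by definition, the image of an idempotent unary polynomial and therefore a neighborhood, so Lemma~\ref{nilgrp} applies and delivers the nilpotent permutation group action on each minimal set, with nilpotence class at most $k$.

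The heart of the argument is $(3)\Rightarrow(4)$. Fix a covering $\delta\prec\theta$ in $\Con(\m a)$, choose $U\in\Min_{\m a}(\delta,\theta)$, and a $\langle\delta,\theta\rangle$-trace $N\subseteq U$. Hypothesis (3) says that $\tw_{\beta}(\m a,U)$ acts on $U$ as a nilpotent group of permutations; in particular it is a group of permutations, so Lemma~\ref{centrality} yields $C(\beta,N^2;\delta)$. The remaining work is to propagate centrality from the single trace $N^2$ up to the full relation $\theta$, and to establish the mirror condition $C(\theta,\beta;\delta)$. I would split this into cases according to the type of $\langle\delta,\theta\rangle$. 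For types ${\bf 2}$--${\bf 5}$, the propagation is a standard tame-congruence-theoretic step: every $\theta$-class is the union of pairwise polynomially isomorphic $\langle\delta,\theta\rangle$-traces, and the Maltsev-like or (semi)lattice-like structure that the polynomial operations induce on $N/\delta$ in these types both symmetrizes the two centrality directions and spreads $C(\beta,N^2;\delta)$ to $C(\beta,\theta;\delta)$. For type-${\bf 1}$ coverings one cannot argue this way, because the induced structure on $N/\delta$ is only that of a $G$-set; instead, one first strengthens $C(\beta,N^2;\delta)$ to $\beta$-regularity of $\langle\delta,\theta\rangle$ via Lemma~\ref{regular} (this is exactly where the \emph{nilpotence}, not just the permutation property, of the twin monoid is used), and then appeals to the type-${\bf 1}$ results in \cite{sym} that $\beta$-regularity of a tame quotient forces both $C(\beta,\theta;\delta)$ and $C(\theta,\beta;\delta)$.

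I expect the type-${\bf 1}$ portion of $(3)\Rightarrow(4)$ to be the main obstacle, since it is the only case where the two centrality directions are not automatically tied together and where the group-theoretic input of Lemma~\ref{group}, channelled through Lemma~\ref{regular}, is genuinely needed to rule out the ``leftover'' polynomial permutations that could otherwise defeat $\beta$-regularity.

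Finally, $(4)\Rightarrow(5)$ is a finite-descending-chain argument in $\Con(\m a)$. Set $\mu_{1}=\beta$ and $\mu_{\ell+1}=[\beta,\mu_{\ell}]=(\beta]^{\ell+1}$. If $\mu_{\ell}>0$, pick any $\delta\prec\mu_{\ell}$ in $\Con(\m a)$; hypothesis (4) gives $C(\beta,\mu_{\ell};\delta)$, hence $\mu_{\ell+1}\le\delta<\mu_{\ell}$. Since $\Con(\m a)$ is finite, this strictly descending chain must reach $0$, so $(\beta]^{\ell}=0$ for some $\ell$. The symmetric argument, with the roles reversed and $C(\theta,\beta;\delta)$ in place of $C(\beta,\theta;\delta)$, gives $[\beta)^{m}=0$ for some $m$, completing the proof that $\beta$ is both left and right nilpotent.
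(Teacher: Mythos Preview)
Your overall architecture matches the paper's, and the easy implications $(1)\Rightarrow(2)$, $(2)\Rightarrow(3)$, and $(4)\Rightarrow(5)$ are handled exactly as the paper does. The substantive difference is in $(3)\Rightarrow(4)$, where your direct case split by type leaves a gap in the symmetric direction $C(\theta,\beta;\delta)$.

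For the forward direction $C(\beta,\theta;\delta)$ your sketch is essentially right: Lemma~\ref{centrality} gives $C(\beta,N^2;\delta)$, and for types ${\bf 2}$--${\bf 5}$ this propagates to $C(\beta,\theta;\delta)$ by \cite[Lemma~4.2]{sym}, while for type~${\bf 1}$ the $\beta$-regularity from Lemma~\ref{regular} together with \cite[Lemma~4.13]{sym} does the job. But your claim that the Maltsev-like or semilattice-like trace structure ``symmetrizes the two centrality directions'' in types ${\bf 2}$--${\bf 5}$ is not justified: outside congruence modular varieties the centralizer relation is not symmetric, and the results in \cite{sym} that do yield $C(\beta,\theta;\delta)\Leftrightarrow C(\theta,\beta;\delta)$ over a prime quotient (Lemmas~3.1 and 3.2 for types $\neq{\bf 1}$, Lemma~4.14 for type~${\bf 1}$ with $\beta$-regularity) are stated under the standing hypothesis that $\beta$ is \emph{left nilpotent}. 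The paper therefore does not go directly from (3) to (4). It first proves that $\beta$ is left nilpotent, arguing that if the series $(\beta]^i$ stabilized at some $\theta>0$, then for any $\delta\prec\theta$ one would have $C(\beta,N^2;\delta)$ but not $C(\beta,\theta;\delta)$, forcing type~${\bf 1}$ by \cite[Lemma~4.2]{sym} and then contradicting $\beta$-regularity via \cite[Lemma~4.13]{sym} and Lemma~\ref{regular}. Only after left nilpotence is in hand does the paper invoke the symmetry results from \cite{sym} to obtain $C(\theta,\beta;\delta)$ for every prime quotient. Your route can be repaired along the same lines---prove the $C(\beta,\theta;\delta)$ half of (4) first, deduce left nilpotence exactly as in your $(4)\Rightarrow(5)$ argument, and only then use left nilpotence plus \cite{sym} to get $C(\theta,\beta;\delta)$---but as written the symmetry step is the missing idea, and the hand-wave about trace structure does not supply it.
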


\begin{proof}

The fact that $(1)\Rightarrow(2)$
is proved in Lemma~\ref{snags}.

The fact that $(2)\Rightarrow(3)$
is a consequence of Lemma~\ref{nilgrp}.

Now we prove that $(3)\Rightarrow(4)$.
Assume (3). We first prove that $\beta$ is left
  nilpotent, and then we prove (4).

Assume the sequence $\beta = (\beta]^1\geq (\beta]^2\geq \cdots$
stabilizes at $\theta$, that is that $\theta = \bigcap_i (\beta]^i$.
If $\theta = 0$, then $\beta$ is left nilpotent and there
is nothing more to prove at this point.
Otherwise $\theta=(\beta]^N=(\beta]^{N+1}>0$ for some $N$.
Choose $\delta$ so that $\delta\prec\theta$
in this case.

We cannot have $C(\beta,\theta;\delta)$, else
$[\beta,\theta]=(\beta]^{N+1}\leq \delta < \theta=(\beta]^N$,
which contradicts the choice of $\theta$.
But we do have $C(\beta,N^2;\delta)$ for some (any)
$\langle \delta,\theta\rangle$-trace,
according to Lemma~\ref{centrality}.
Using terminology from
\cite{sym}, the fact that $\beta$ centralizes
a $\langle \delta,\theta\rangle$-trace $N$ but does not centralize
the entire congruence quotient means that 
$\langle \delta,\theta\rangle$ is not $\beta$-coherent.
From \cite[Lemma~4.2]{sym},
the type of
$\langle \delta,\theta\rangle$ must be ${\bf 1}$.
From 
\cite[Lemma~4.13]{sym},
$\langle \delta,\theta\rangle$ cannot be $\beta$-regular.
However, we proved in Lemma~\ref{regular} that
$\langle \delta,\theta\rangle$ is $\beta$-regular
when it is of type ${\bf 1}$.
This contradicts our assumption that
$\beta = (\beta]^1\geq (\beta]^2\geq \cdots$
    stabilizes at some $\theta>0$, so we may conclude
    that $\beta$ is left nilpotent.

Now we change notation and allow
$\langle \delta,\theta\rangle$ to be
an arbitrary prime quotient of $\m a$.
It is shown in 
\cite[Lemmas~3.1 and 3.2]{sym}
that the following conditions are equivalent
when $\beta\in\Con(\m a)$ is left nilpotent
and the type of the prime quotient $\langle \delta,\theta\rangle$
is not ${\bf 1}$:
\begin{enumerate}
\item[(i)] $C(\beta,\theta;\delta)$,
\item[(ii)] $[\beta,\theta]\leq\delta$,
\item[(iii)] $C(\theta,\beta;\delta)$, and
\item[(iv)] $[\theta,\beta]\leq\delta$.
\end{enumerate}
Moreover, we can add the following
equivalent condition to this list
when the type of $\langle \delta,\theta\rangle$
is not ${\bf 1}$:
\begin{enumerate}
\item[(v)] $C(\beta,N^2;\delta)$ for some (any)
$\langle \delta,\theta\rangle$-trace $N$.  
\end{enumerate}
The reason that this can be added is that
$C(\beta,\theta;\delta)\Rightarrow C(\beta,N^2;\delta)$
always holds,
since $N^2\subseteq \theta$, while 
\cite[Lemma~4.2]{sym} proves that the reverse implication
$C(\beta,N^2;\delta)\Rightarrow C(\beta,\theta;\delta)$
can only fail when the type of
$\langle \delta,\theta\rangle$ is ${\bf 1}$.
We proved in Lemma~\ref{centrality} that (v)
holds under
our assumption (3), so we derive here that 
each of (i)--(iv) from above hold when the type
of $\langle \delta,\theta\rangle$ is not ${\bf 1}$.

When the type
of $\langle \delta,\theta\rangle$ \emph{is} ${\bf 1}$,
we can argue the same way as long as
$\langle \delta,\theta\rangle$ is $\beta$-regular,
according to 
\cite[Lemma~4.14]{sym}.
Since we established $\beta$-regularity in Lemma~\ref{regular},
we are done.

Now it is easy to show that $(4)\Rightarrow(5)$.
From (4) it follows that if $\theta$ is any
nonzero congruence of $\m a$, then
$[\beta,\theta]<\theta$ and 
$[\theta,\beta]<\theta$, so by the finiteness of $\Con(\m a)$
any mixed commutator expression
involving enough $\beta$'s must equal zero.
  \end{proof}

\bibliographystyle{plain}

\end{document}